\newtheorem{theorem}{Theorem}
\newtheorem{lemma}{Lemma}
\newtheorem{corollary}{Corollary}
\newtheorem{proposition}{Proposition}
\newproof{proof}{Proof}
\newdefinition{definition}{Definition}
\newdefinition{example}{Example}
\newcommand{\hookdownarrow}{\mathrel{\rotatebox[origin=c]{-90}{$\hookrightarrow$}}}
\newcommand{\sgn}{\mathop{\rm sgn}}
\newcommand{\Hom}{\mathop{\rm Hom}\nolimits}
\newcommand{\Z}{\mathbb Z}
\newcommand{\Q}{\mathbb Q}
\newcommand{\C}{\mathbb C}
\def\@journal{arXiv}\date{}
\begin{document}


\title{Subset Representations and Eigenvalues of the Universal Intertwining Matrix}

\author{Dirk Siersma}
\ead{D.Siersma@uu.nl}
\author{Wilberd van der Kallen\corref{cor1}} 
\ead{W.vanderKallen@uu.nl}
\address{Mathematical Institute, Utrecht University,  P.O.Box 80.010,\\ 3508 TA Utrecht, The Netherlands}
\cortext[cor1]{Corresponding author}

\begin{keyword}
{  gradient index \sep intertwining \sep  subset representation \sep  Specht module \sep  Young's Rule
 \sep 
Johnson scheme} 
\MSC[2020]{05A18, 05E10, 05E30, 20C30, 33F10, 58K05}
\end{keyword}

\begin{abstract}
We solve a combinatorial question concerning eigenvalues of the universal intertwining endomorphism of a subset representation.
\end{abstract}

\maketitle

\section{Introduction}
The symmetric group $S_n$ acts on the set $\mathcal{C}^{n}_{k}$ of subsets  of $k$ elements of a set of $n$ elements. This defines  a
representation, known as a subset representation. Subset representations are related to Young Tableaux  with 2 rows. 
We consider  the universal intertwining matrix $B_{(n-k,k)}$ for a subset representation and use Schur's lemma and Young's rule to show that 
the eigenvalues are $\Z$-linear in the natural parameters of the intertwining matrix  (Proposition~\ref{Prop 1}).
Next we compute the eigenvalues (Theorem \ref{Thm 1}).
In the terminology that is customary in algebraic combinatorics, what we are doing is recomputing the ``eigenmatrix $P$ of a Johnson scheme''.
This eigenmatrix was determined much earlier by Delsarte~\cite{De1}. (Around this time Philips was developing the compact disc.)
Theorem \ref{Thm 1} is then applied to justify the evaluation  of the Eisenbud-Levine-Khimshiashvili (ELK) signature formula
for the gradient index at a degenerate star in \cite{Si}. For this we also need the package MultiSum \cite{weg}, in order to perform a summation
of complicated hypergeometric terms.

\section{Subset representations, intertwiners, Young's rule}

\subsection{The question}

\noindent Let $n$ be a positive integer and $0\le k\le \lfloor n/2 \rfloor$.
Consider combinations $\mathcal{C}^{n}_{k}$ of $k$ elements (unordered) out of a set of $n$ elements. 
Take an arbitrary tuple of complex numbers $b_0,\cdots ,b_{k}$. We constitute a matrix $B=B_{(n-k,k)}$, 
where the rows and columns are indexed (in lexicographic order) by elements of $\mathcal{C}^{n}_k$.
The matrix elements are defined as follows: 
\begin{center}
$ < \sigma, \tau> {}= b_p$ if  $ \sigma \cap \tau $ has $p$ elements ($ 0 \le p \le k$).
\end{center}
We want to compute the eigenvectors and eigenvalues. The result is needed in \cite[\S 3]{Si} for the computation of a `gradient index'.

\begin{figure}
	\centering
\begin{minipage}{0.49\textwidth}
\centering
	\includegraphics[width=0.4 \linewidth]{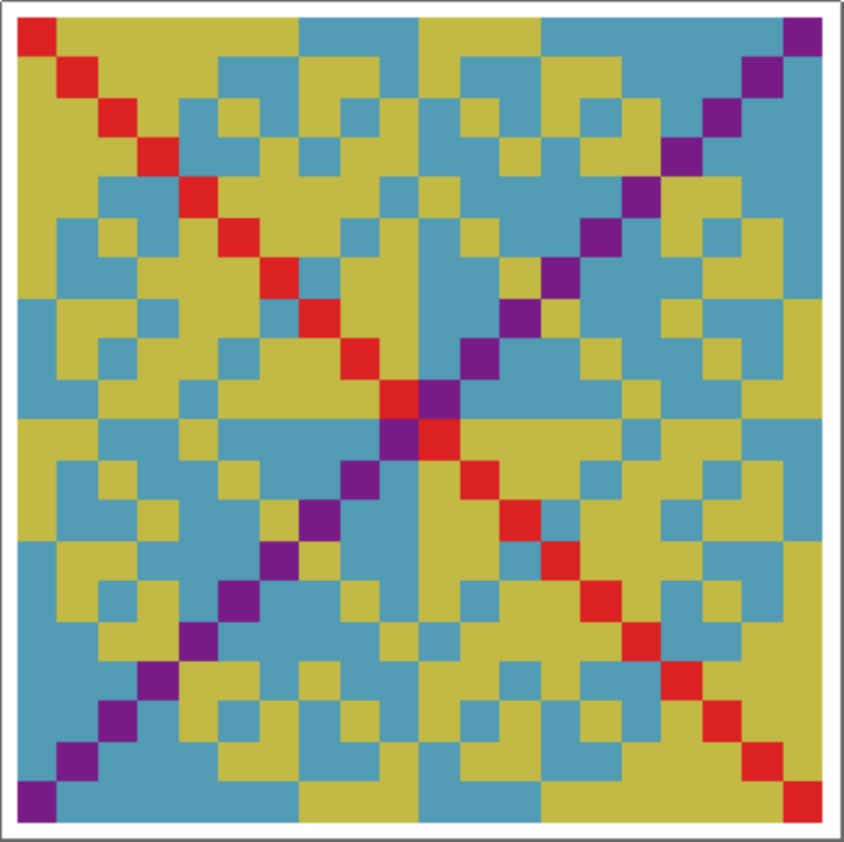}
\end{minipage}
\begin{minipage}{.49\textwidth}
\centering
	\includegraphics[width=0.4 \linewidth]{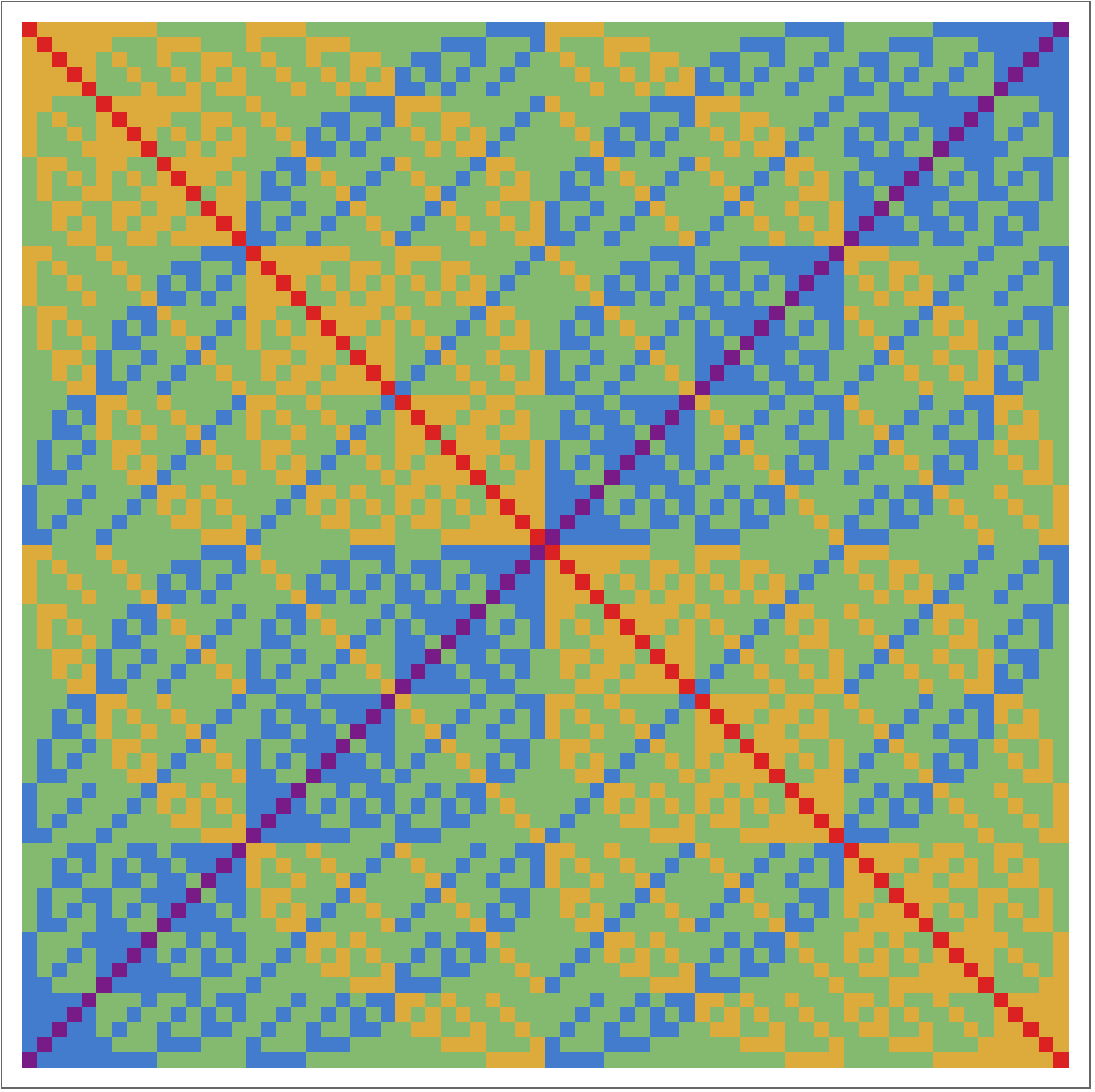}
\end{minipage}
	\label{fig:array}
\caption{$B_{2,2}$ and $B_{3,3}$;  coloured by $b_p$.}
\end{figure}

\subsection{Intertwining}
The symmetric group $S_n$ acts on  $\mathcal{C}^{n}_{k}$  and thus defines  a permutation representation known as $M^{(n-k,k)}$
in \cite[p.\ 86]{Fu}. In Prasad \cite[\S 2.5]{Pr}  
the representation is called a subset representation $\C[X_k]$, where $X_k$ is our $\mathcal{C}^{n}_{k}$. 

A linear map  $N:M^{(n-k,k)}\to M^{(n-k,k)}$ is called {\it intertwining}  if  $g N = N g$  for all $g \in S_n$.

\smallskip

\begin{lemma}Let $N\in \Hom_\C(M^{(n-k,k)},M^{(n-k,k)})$. Use the standard basis of $M^{(n-k,k)}$ to view $N$ as a matrix.
Then $N$ is intertwining if and only if
the matrix elements $N_{\sigma_1,\tau_1}$ and $N_{\sigma_2,\tau_2}$ are equal as soon as 
$\sigma_1 \cap \tau_1$ and  $\sigma_1 \cap \tau_1$  have the same cardinality. In particular, $B$ is the `universal intertwining matrix'.
\end{lemma}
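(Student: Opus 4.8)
The plan is to translate the operator identity $gN=Ng$ into a condition on matrix entries, and then to reduce that condition to the stated cardinality criterion by identifying the orbits of $S_n$ on pairs of $k$-subsets.

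First I would fix notation, writing $\{e_\sigma : \sigma\in\mathcal{C}^n_k\}$ for the standard basis, so that $g\,e_\sigma=e_{g\sigma}$ and the entries of $N$ are read off from $N e_\tau=\sum_\sigma N_{\sigma,\tau}\,e_\sigma$. A direct computation with the permutation matrices of the elements $g$ then gives $(gNg^{-1})_{\sigma,\tau}=N_{g^{-1}\sigma,\,g^{-1}\tau}$. Hence $N$ is intertwining, i.e.\ $gNg^{-1}=N$ for all $g$, if and only if
\[
N_{g\sigma,\,g\tau}=N_{\sigma,\tau}\qquad\text{for all } g\in S_n \text{ and all } \sigma,\tau\in\mathcal{C}^n_k,
\]
that is, if and only if $N$ is constant on each orbit of the diagonal action of $S_n$ on $\mathcal{C}^n_k\times\mathcal{C}^n_k$. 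This step is routine bookkeeping with permutation matrices.

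The heart of the matter is the combinatorial claim that these diagonal orbits are parametrized exactly by the intersection cardinality $p=|\sigma\cap\tau|$. One direction is immediate, since $|g\sigma\cap g\tau|=|\sigma\cap\tau|$. For the converse I would show that $p$ is a complete invariant: given $\sigma,\tau$ with $|\sigma\cap\tau|=p$, the ground set of $n$ elements splits into the four disjoint ``Venn'' blocks $\sigma\cap\tau$, $\sigma\setminus\tau$, $\tau\setminus\sigma$, and the common complement, of sizes $p$, $k-p$, $k-p$, and $n-2k+p$ respectively (all nonnegative because $k\le n/2$). Since these sizes depend only on $p$ for fixed $n,k$, any two pairs with the same $p$ have blocks of matching sizes, so there is a permutation $g\in S_n$ carrying the blocks of the first pair onto those of the second; such a $g$ satisfies $g\sigma=\sigma'$ and $g\tau=\tau'$, establishing transitivity within each cardinality class.

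Combining the two steps, $N$ is intertwining precisely when its entry $N_{\sigma,\tau}$ depends only on $|\sigma\cap\tau|$, which is the asserted criterion. For the final remark, the space of intertwiners is therefore spanned by the $k+1$ orbit-indicator matrices $A_0,\dots,A_k$, and $B=B_{(n-k,k)}=\sum_{p=0}^k b_p A_p$ with independent parameters $b_p$ is the generic such operator; in this sense it is the universal intertwining matrix. I expect the only genuine obstacle to be the careful verification of the second step, namely that the matching of block sizes really does produce a single permutation realizing the orbit equivalence; everything else is formal.
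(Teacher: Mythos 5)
Your proposal is correct and is precisely the standard argument: translate $gN=Ng$ into $N_{g\sigma,g\tau}=N_{\sigma,\tau}$, then classify the diagonal $S_n$-orbits on pairs of $k$-subsets by the intersection cardinality via the four Venn blocks of sizes $p$, $k-p$, $k-p$, $n-2k+p$ (the last nonnegative since $k\le\lfloor n/2\rfloor$), and the step you flag as the only obstacle is indeed unproblematic, since the blocks partition the ground set in both pairs with matching sizes. The paper itself gives no details, deferring to Theorem 2.51 of Prasad, and your write-up is exactly the proof that citation encapsulates, so the approaches coincide.
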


\begin{proof} This is easy  and essentially Theorem 2.51 in Prasad \cite{Pr}.\qed
\end{proof}

\subsection{Specht modules}
With $n$, $k$ as above,
let $\nu$  be the two part partition $(n-k,k)$ of $n$.
We define $T_k$ to be the maximal standard tableau \cite[pp. 84--85]{Fu} of shape $\nu$. 
One could call its numbering lexicographic. See figure \ref{T2} for examples.
   \begin{figure}[H]\caption{$T_1, T_2 ,T_3$  if $n=6$.}\label{T2}
  \begin{center}
   \young(1345,6) ,\qquad \young(1234,56), \qquad  \young(123,456)\ .
   \end{center}
   \end{figure}

Recall that the {\it row subgroup} $R(T_k)$ of $T_k$ is the subgroup of $S_n$ which consists of those permutations that permute the entries
of each row among themselves. Similarly the {\it column subgroup} $C(T_k)$ of $T_k$ is the subgroup of $S_n$ which consists 
of those permutations that permute the entries of each column among themselves. One now puts
 $$a_k=\sum_{p\in R(T_k)} p\ , \qquad b_k=\sum_{q\in C(T_k)}\sgn(q)q\ , \qquad c_k = b_ka_k\ ,$$
where the  product  is taken in the group ring $\C[S_n]$.
The $c_k$ are known as   \emph{Young symmetrizers}.

\smallskip
\noindent
 The \emph{Specht module} $S^\nu$ may now be defined as the image  of the endomorphism of $ \C[S_n]$ that is
 right multiplication  by $c_k$ \cite[p. 119]{Fu}.
 The Specht module  $S^\nu$ is an irreducible $S_n$ module of dimension $$f^\nu=\frac{n!(n-2k+1)}{k!(n-k+1)!}=\tbinom{n}{k}-\tbinom{n}{k-1}$$ 
 \cite[Exercise 2.5.4]{Pr}, \cite[p. 88]{Fu}. The $S^\nu$ for distinct $\nu$'s are non-isomorphic.
 
  \begin{proposition}[Young's rule]\cite[p.\ 92]{Fu}, \cite[Thm.\ 3.3.1, Exercise 3.3.5]{Pr}.\label{Prop 1}\\
   Let $0\le m \le  \lfloor n/2 \rfloor$. Then
                                                 $$  M^{(n-m,m)}  \cong \bigoplus_{ k=0}^m  S^{(n-k,k)}.$$                                               
\qed
\end{proposition}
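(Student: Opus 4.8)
The plan is to prove the decomposition by induction on $m$, using the $S_n$-equivariant ``down'' operator between adjacent subset representations together with complete reducibility. First I would record the dimension bookkeeping: since $f^{(n-k,k)}=\binom{n}{k}-\binom{n}{k-1}$, the right-hand side has dimension $\sum_{k=0}^{m}\bigl(\binom{n}{k}-\binom{n}{k-1}\bigr)=\binom{n}{m}=\dim M^{(n-m,m)}$, the sum telescoping with $\binom{n}{-1}=0$. So the statement is consistent on dimensions, and an induction that peels off exactly one new Specht module per step has a chance of closing precisely.

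For the inductive step I would introduce the linear map $D_m\colon M^{(n-m,m)}\to M^{(n-m+1,m-1)}$ sending each $m$-subset to the formal sum of its $(m-1)$-element subsets; it is manifestly $S_n$-equivariant because it is defined symmetrically. The base case $m=0$ is the trivial module $M^{(n,0)}=\C=S^{(n,0)}$. Assuming $M^{(n-m+1,m-1)}\cong\bigoplus_{k=0}^{m-1}S^{(n-k,k)}$, Maschke's theorem gives $M^{(n-m,m)}\cong\ker D_m\oplus\mathrm{im}\,D_m$ as $S_n$-modules. If $D_m$ is surjective, then $\mathrm{im}\,D_m=M^{(n-m+1,m-1)}$ carries exactly $\bigoplus_{k=0}^{m-1}S^{(n-k,k)}$ by hypothesis, and $\dim\ker D_m=\binom{n}{m}-\binom{n}{m-1}=f^{(n-m,m)}$. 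It remains to identify $\ker D_m$ with $S^{(n-m,m)}$: since $S^{(n-m,m)}$ occurs with multiplicity one in $M^{(n-m,m)}$ (its diagonal Kostka number equals $1$) but cannot occur in $\mathrm{im}\,D_m\cong M^{(n-m+1,m-1)}$, because $(n-m,m)$ fails to dominate $(n-m+1,m-1)$, the complement $\ker D_m$ must contain a copy of it, and the dimension equality then forces $\ker D_m\cong S^{(n-m,m)}$. One also uses here that dominance $\mu\trianglerighteq(n-m,m)$ forces $\mu$ to have at most two parts, so no other irreducibles can intervene.

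The main obstacle is precisely the surjectivity of $D_m$ for $m\le\lfloor n/2\rfloor$. I would deduce it from the full-rank statement for the inclusion matrix of $(m-1)$-subsets into $m$-subsets (a theorem of Gottlieb), which gives rank $\binom{n}{m-1}$ whenever $2m-1\le n$; or, more self-containedly and in keeping with this paper, by passing to the adjoint ``up'' operator $U_m=D_m^{\ast}$ with respect to the standard basis and showing that the positive-semidefinite intertwiner $D_mU_m$ is in fact invertible on $M^{(n-m+1,m-1)}$. The latter reduces, via Schur's lemma applied one Specht component at a time, to checking that its $S_n$-eigenvalues are nonzero for $m\le n/2$, which is exactly the flavour of the eigenvalue computation carried out later in the paper.

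As an alternative that avoids the induction, one can invoke the general form of Young's rule, $M^{\lambda}\cong\bigoplus_{\mu}K_{\mu\lambda}\,S^{\mu}$, and compute the Kostka number $K_{(n-k,k),(n-m,m)}$ directly: in a semistandard filling of shape $(n-k,k)$ with content $(n-m,m)$, column-strictness forces a $2$ below each of the $k$ cells of the second row, after which the weakly increasing first row is determined, leaving no freedom. Hence the Kostka number is $1$ for $0\le k\le m$ and $0$ otherwise, which yields the claimed multiplicity-free sum.
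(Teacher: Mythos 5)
Your argument is correct, but it takes a genuinely different route from the paper: the paper offers no proof of this proposition at all --- it is stated with citations to Fulton and Prasad, i.e.\ the general Young's rule $M^{\lambda}\cong\bigoplus_{\mu}K_{\mu\lambda}S^{\mu}$ specialized to two-row shapes, which is exactly your closing alternative (and there your Kostka computation, $K_{(n-k,k),(n-m,m)}=1$ for $0\le k\le m$ and $0$ otherwise, is the entire content). Your primary inductive proof via the down operator $D_m$ is therefore the more self-contained contribution: it rests only on complete reducibility, on surjectivity of $D_m$ for $2m\le n$ (Gottlieb's theorem; or your adjoint route, where the scalar by which $D_mD_m^{\ast}$ acts on the $S^{(n-k,k)}$-component of $M^{(n-m+1,m-1)}$ is $(m-k)(n-m-k+1)>0$ for $k\le m-1$ and $m\le n/2$, computable from the commutation relation $DU-UD=(n-2j)\,\mathrm{id}$ on the $j$-th level of the Boolean lattice), and on the dominance lemma that $S^{\mu}$ occurs in $M^{\lambda}$ only if $\mu\trianglerighteq\lambda$. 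Two cautions. First, that dominance lemma must be quoted as the elementary Young-symmetrizer fact (strictly weaker than Young's rule), or your induction becomes circular; for the same reason you should not certify the nonvanishing of the eigenvalues of $D_mD_m^{\ast}$ by appeal to the paper's Theorem~\ref{Thm 1}, whose derivation presupposes this very proposition --- use the commutator identity instead. Second, your multiplicity-one claim and the remark about partitions dominating $(n-m,m)$ are superfluous: mere occurrence of $S^{(n-m,m)}$ in $M^{(n-m,m)}$ (which the paper itself obtains from $\pi(c_m)\neq 0$, as in Lemma~\ref{L5}) together with $S^{(n-m,m)}\not\subseteq\mathrm{im}\,D_m$ and your exact count $\dim\ker D_m=f^{(n-m,m)}$ already forces $\ker D_m\cong S^{(n-m,m)}$. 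What each approach buys: the citation/Kostka route is immediate and matches the references the paper leans on; your induction yields a self-contained proof whose key step --- invertibility of an explicit intertwiner, namely $D_mD_m^{\ast}=B_{(n-m+1,m-1)}$ specialized at $b_{m-1}=n-m+1$, $b_{m-2}=1$ and all other $b_i=0$ --- is very much in the spirit of the eigenvalue computations the paper carries out.
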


\section{Schur's lemma and Eigenvalues} 
\subsection{Diagonalizing}Let $0\le m\le \lfloor n/2 \rfloor$.
Choose a basis $d=d_1,\cdots,d_{\binom{n}{m}}$ of $M^{(n-m,m)}$ which is the union of bases of the $m+1$ irreducible submodules.
Then the basis $d$ diagonalizes all intertwining maps $M^{(n-m,m)}\to M^{(n-m,m)}$  simultaneously, by Schur's Lemma.
As recalled after Lemma~\ref{L5} below, the $d_i$ may be chosen in the $\Q$-span of the standard basis.
In particular, $B_{(n-m,m)}$ transforms to a diagonal matrix $D$ with $\Q$-linear combinations of the $b_i$ on the diagonal.
If one specializes $b_i=1$ and puts the other $b_j$ equal to zero, then the eigenvalues become algebraic integers because they
are roots of the characteristic polynomial of a matrix with integer entries.
We have proved:

\begin{proposition} The matrix $B_{(n-m,m)}$ has the properties:
\begin{itemize}
\item The eigenspaces  are independent of a (generic) choice of $b_0,\cdots b_m$,
\item The eigenvalues are $\mathbb Z$-linear combinations of  $b_0,\cdots b_m$.
\end{itemize}\qed
\end{proposition}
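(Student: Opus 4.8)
The plan is to exploit the multiplicity-free decomposition supplied by Young's rule together with Schur's lemma, and then to pin down the eigenvalues arithmetically. First I would decompose $B_{(n-m,m)}$ as a $\C$-linear combination
$$B_{(n-m,m)}=\sum_{p=0}^{m} b_p\,B^{(p)},$$
where $B^{(p)}$ is the $0/1$ matrix whose $(\sigma,\tau)$ entry equals $1$ precisely when $|\sigma\cap\tau|=p$. Each $B^{(p)}$ is itself an intertwining endomorphism of $M^{(n-m,m)}$, since its entries depend only on $|\sigma\cap\tau|$, and it has integer entries.

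Next, by Young's rule (Proposition~\ref{Prop 1}) we have $M^{(n-m,m)}\cong\bigoplus_{k=0}^{m}S^{(n-k,k)}$ into pairwise non-isomorphic irreducibles, so the decomposition is multiplicity-free. Schur's lemma then forces every intertwining endomorphism, in particular each $B^{(p)}$, to act as a scalar $\lambda_k^{(p)}$ on the summand $S^{(n-k,k)}$. Consequently the common eigenspaces of all the $B^{(p)}$, and hence of $B_{(n-m,m)}$, are exactly the Specht modules $S^{(n-k,k)}$. Since these summands depend only on the $S_n$-module structure of $M^{(n-m,m)}$, which is fixed, they do not depend on the choice of $b_0,\dots,b_m$; this gives the first bullet. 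The eigenvalue of $B_{(n-m,m)}$ on $S^{(n-k,k)}$ is then $\sum_{p=0}^{m} b_p\,\lambda_k^{(p)}$, visibly linear in the $b_p$ with complex coefficients $\lambda_k^{(p)}$.

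The main obstacle is to upgrade these coefficients from complex numbers to integers, yielding the second bullet, and I would do this in two stages. On one hand, $\lambda_k^{(p)}$ is an eigenvalue of the integer matrix $B^{(p)}$, hence a root of a monic integer polynomial, i.e.\ an algebraic integer. On the other hand, because the irreducible summands are spanned by vectors in the $\Q$-span of the standard basis (the rationality recalled after Lemma~\ref{L5}), picking a nonzero rational vector $v\in S^{(n-k,k)}$ gives $B^{(p)}v=\lambda_k^{(p)}v$ with both $v$ and $B^{(p)}v$ rational, so $\lambda_k^{(p)}\in\Q$. A rational algebraic integer is an ordinary integer, whence $\lambda_k^{(p)}\in\Z$ and each eigenvalue $\sum_p b_p\,\lambda_k^{(p)}$ is a $\Z$-linear combination of the $b_p$. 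The delicate input throughout is the $\Q$-rationality of the Specht summands: it is precisely what excludes genuinely irrational (though algebraic) eigenvalue coefficients, and so it is the ingredient that promotes mere $\C$-linearity to the asserted $\Z$-linearity of the eigenvalues.
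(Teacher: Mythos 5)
Your proposal is correct and takes essentially the same approach as the paper: your matrices $B^{(p)}$ are exactly the paper's specialization ``$b_i=1$, other $b_j=0$'', and you combine Young's rule, Schur's lemma, the $\Q$-rationality of the diagonalizing basis, and algebraic integrality of eigenvalues of integer matrices in precisely the same way. Your write-up is merely more explicit in naming the scalars $\lambda_k^{(p)}$ and in spelling out the ``rational algebraic integer is an integer'' step that the paper leaves implicit.
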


\subsection{Mapping Specht modules to a subset representation}
Let $\Omega$ be the last element of $\mathcal{C}^{n}_{m}$. So $\Omega=\llbracket n-m+1,n\rrbracket $, the set of integers in $[n-m+1,n ]$.
We define an $S_n$-linear map $\pi:\C[S_n]\to M^{(n-m,m)}$ by $$\pi(p)=p(\Omega).$$

\noindent
Our strategy is now as follows. We know already the eigenspaces of $B = B_{(n-m,m)}$ and want to compute eigenvalues. 
Below we take the eigenvector $\pi(c_k)$ and compare it with its image $B\pi(c_k)$. It is sufficient to consider only one of the coordinates,
in fact the `last coordinate' will do.   Note that $c_k$ is a double sum  of signed products $\pm qp$. We first look at the effect of $\pi$ on each term.

\medskip
\noindent
Let $0\le k\le m$. We will focus on the sets $\Omega\cap \pi(qp)$, where 
  $p\in R(T_k)$, $q\in C(T_k)$.
  Notice that $p$, $q$ permute the elements of $\llbracket 1,n\rrbracket $, not the boxes in a tableau. Nevertheless a
 diagram-figure makes it easier to follow the actions of $p$ and $q$ . See figure \ref{tab}. 
 We write $\Omega = \Omega_1 \cup \Omega_2$, where $\Omega_1=\llbracket n-m+1,n-k\rrbracket $ and $\Omega_2=\llbracket n-k+1,n\rrbracket $.

\begin{figure}[ht]
\caption{Positions of $V$ and $W$ in Young diagram of shape $(n-k,k)$.}\label{tab}
\unitlength.002em
\begin{center}
\begin{picture}(10878,5800)
\ifx\allinethickness\undefined
  \def\XFigeepicthickness#1{\relax}
\else
  \let\XFigeepicthickness\allinethickness
\fi
{\color{black}
\put(00,3055){\makebox(0,0)[lb]{\smash{\fontsize{12}{14.4}\usefont{T1}{ptm}{m}{n}1}}}
\put(10087,3165){\makebox(0,0)[lb]{\smash{\fontsize{12}{14.4}\usefont{T1}{ptm}{m}{n}n-k}}}
\color[gray]{0.75}
\polygon*(12,1515)(1737,1515)(1737,315)(12,315)
\color{black}
\linethickness{7.5\unitlength}\XFigeepicthickness{7.5\unitlength}
\polygon(12,1515)(1737,1515)(1737,315)(12,315)
\color[gray]{0.75}
\polygon*(8412,1515)(10812,1515)(10812,2865)(8412,2865)
\color{black}
\polygon(8412,1515)(10812,1515)(10812,2865)(8412,2865)
\put(612,805){\makebox(0,0)[lb]{\smash{\fontsize{12}{14.4}\usefont{T1}{ptm}{m}{n}$\Omega_2$}}}
\put(9362,2105){\makebox(0,0)[lb]{\smash{\fontsize{12}{14.4}\usefont{T1}{ptm}{m}{n}$\Omega_1$}}}
\put(652,3940){\makebox(0,0)[lb]{\smash{\fontsize{12}{14.4}\usefont{T1}{ptm}{m}{n}\Huge$\hookdownarrow$}}}
\put(802,5065){\makebox(0,0)[lb]{\smash{\fontsize{12}{14.4}\usefont{T1}{ptm}{m}{n}V}}}
\put(9362,5065){\makebox(0,0)[lb]{\smash{\fontsize{12}{14.4}\usefont{T1}{ptm}{m}{n}W}}}
\put(9362,3940){\makebox(0,0)[lb]{\smash{\fontsize{12}{14.4}\usefont{T1}{ptm}{m}{n}\Huge$\hookdownarrow$}}}
\put(1450,-100){\makebox(0,0)[lb]{\smash{\fontsize{12}{14.4}\usefont{T1}{ptm}{m}{n}n}}}
\put(1450,3090){\makebox(0,0)[lb]{\smash{\fontsize{12}{14.4}\usefont{T1}{ptm}{m}{n}k}}}
\put(7500,3165){\makebox(0,0)[lb]{\smash{\fontsize{12}{14.4}\usefont{T1}{ptm}{m}{n}n-m}}}
\polygon(12,2865)(8412,2865)(8412,1515)(12,1515)
\multiput(1737,1515)(0,60){23}{\line(0,1){35}}
}%
\end{picture}%
\end{center}
\end{figure}

We put
\begin{equation}\label{V}
V=\{\;i\mid q(i)\neq i\leq k\;\}   \quad  \mbox{\rm and}
\end{equation}
\begin{equation}\label{W}
W=\{\;i\in \Omega\mid p(i)\notin\Omega\cup V \;\} \subseteq \Omega_1. 
\end{equation}
Notice that $\sgn(q)=(-1)^{\#V}$, where $\#X$ denotes the cardinality of a set $X$.

\begin{lemma}With these $q$, $p$, $V$, $W$, the cardinality of
$\Omega\cap \pi(qp)$ equals  $m-\#V-\# W$.\qed
\end{lemma}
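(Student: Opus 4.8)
The plan is to make everything explicit in the canonical numbering of $T_k$ and then count. Recall that in $T_k$ the first row is $\llbracket 1,n-k\rrbracket$ (which contains $\Omega_1$) and the second row is exactly $\Omega_2=\llbracket n-k+1,n\rrbracket$. Hence every $p\in R(T_k)$ fixes each row setwise, while $C(T_k)$ is generated by the $k$ column transpositions $(j,\,n-k+j)$ with $1\le j\le k$; the set $V$ records precisely which of these columns $q$ actually swaps, so $\#V$ is the number of transpositions composing $q$. Since $\pi$ is evaluation at $\Omega$ and is $S_n$-linear, $\pi(qp)=q\,p(\Omega)$, and the lemma amounts to computing $\#\bigl(\Omega\cap q\,p(\Omega)\bigr)$.

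First I would compute $p(\Omega)$. Because $\Omega_2$ is the entire second row, $p$ maps it onto itself, whereas $A:=p(\Omega_1)$ is a subset of the first row of the same size $m-k$ as $\Omega_1$; thus $p(\Omega)=A\cup\Omega_2$ with $A\cap\Omega_2=\varnothing$. Next I apply $q$ and track the blocks separately: $q$ fixes the elements of $A$ outside $V$ and sends each $a\in A\cap V$ down to $n-k+a\in\Omega_2$, while on $\Omega_2$ it fixes the $k-\#V$ elements whose column is not in $V$ and lifts the remaining $\#V$ elements up to the top-row indices $V$. One small but essential point is the inequality $k<n-m+1$, which follows from $m+k\le 2\lfloor n/2\rfloor\le n$; it guarantees that $V$ and the down-shifted elements $\{\,n-k+a:a\in A\cap V\,\}$ are disjoint from $\Omega_1$, and that $V\cap\Omega=\varnothing$.

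With these disjointnesses in hand I would intersect $q\,p(\Omega)$ with $\Omega=\Omega_1\cup\Omega_2$ blockwise. The part landing in $\Omega_2$ contributes the $k-\#V$ fixed elements together with the $\#(A\cap V)$ down-shifted ones, while the part landing in $\Omega_1$ contributes exactly $A\cap\Omega_1$; every other image (the $V$-block, and the first-row elements of $A$ outside $\Omega_1\cup V$, which $q$ fixes) falls outside $\Omega$. This gives $\#\bigl(\Omega\cap q\,p(\Omega)\bigr)=\#(A\cap\Omega_1)+\#(A\cap V)+k-\#V$. To finish, I would use that $p$ restricts to an injection of $\Omega_1$ into the first row, so $A=p(\Omega_1)$ splits into the three disjoint blocks $A\cap\Omega_1$, $A\cap V$, and $A\setminus(\Omega_1\cup V)$, whose sizes add up to $\#A=m-k$. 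By definition $W=\{\,i\in\Omega_1:p(i)\notin\Omega\cup V\,\}$ is carried bijectively by $p$ onto this third block, so $\#\bigl(A\setminus(\Omega_1\cup V)\bigr)=\#W$. Substituting $\#(A\cap\Omega_1)+\#(A\cap V)=m-k-\#W$ into the displayed count yields $m-\#V-\#W$, as claimed.

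I expect the main obstacle to be the bookkeeping in the middle step: one must check that the image-blocks are genuinely disjoint and correctly placed relative to $\Omega_1$ and $\Omega_2$ (this is exactly where the inequality $k<n-m+1$ enters), and then recognize the ``leftover'' count $\#\bigl(A\setminus(\Omega_1\cup V)\bigr)$ as $\#W$ rather than mis-assigning those elements. Everything else is routine once the canonical positions of $V$, $W$, $\Omega_1$, $\Omega_2$ inside $T_k$ are read off from Figure~\ref{tab}.
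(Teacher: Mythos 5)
Your argument is correct and coincides with what the paper intends: the lemma is stated there without proof (the statement itself carries the immediate \textsc{qed}), and the natural verification behind it is exactly your blockwise tracking of $q\,p(\Omega)$ --- writing $p(\Omega)=p(\Omega_1)\cup\Omega_2$ with $A=p(\Omega_1)$, letting the column swaps indexed by $V$ act, and observing that $p$ carries $W$ bijectively onto $A\setminus(\Omega_1\cup V)$, so that $\#\bigl(\Omega\cap qp(\Omega)\bigr)=\#(A\cap\Omega_1)+\#(A\cap V)+(k-\#V)=(m-k-\#W)+(k-\#V)=m-\#V-\#W$. One microscopic misattribution does no harm: the down-shifted elements $\{\,n-k+a:a\in A\cap V\,\}$ lie in $\Omega_2$ and are therefore disjoint from $\Omega_1$ automatically, the inequality $k\le n-m$ (from $k+m\le 2\lfloor n/2\rfloor\le n$) being needed only to ensure $V\cap\Omega=\varnothing$, hence in particular $V\cap\Omega_1=\varnothing$.
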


\begin{lemma}Given  $V\subseteq\llbracket 1,k\rrbracket $, there is a unique $q\in C(T_k)$ such that equation \ref{V} holds.
\qed
\end{lemma}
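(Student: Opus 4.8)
The plan is to make the column subgroup $C(T_k)$ completely explicit and then read off the required bijection directly. First I would record the shape of $T_k$: from its lexicographic numbering the first row carries the entries $\llbracket 1,n-k\rrbracket$ and the second row carries $\llbracket n-k+1,n\rrbracket$, so the only columns with two boxes are columns $1,\dots,k$, column $j$ containing the top entry $j$ and the bottom entry $n-k+j$. Since the column subgroup permutes the entries within each column and the remaining columns $k+1,\dots,n-k$ each have a single box, it follows that $C(T_k)$ is the internal direct product of the two-element groups $\{e,\,(j,\,n-k+j)\}$ over $1\le j\le k$. In particular $C(T_k)$ is an elementary abelian $2$-group of order $2^k$, and every $q\in C(T_k)$ is written uniquely as $q=\prod_{j\in S}(j,\,n-k+j)$ for a unique subset $S\subseteq\llbracket 1,k\rrbracket$.

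Next I would compute the set $V$ attached to such a $q$ by equation~\ref{V}. Because the transpositions $(j,\,n-k+j)$ have pairwise disjoint supports, $q$ moves exactly the elements $j$ and $n-k+j$ for $j\in S$. Imposing the constraint $i\le k$ from the definition of $V$ discards the bottom-row elements, since $n-k+j\ge n-k+1>k$, where one uses $k\le\lfloor n/2\rfloor$, hence $n-k\ge k$. Therefore $V=\{\,j:j\in S\,\}=S$. This identifies the assignment $q\mapsto V$ with the inverse of the parametrization $S\mapsto\prod_{j\in S}(j,\,n-k+j)$, so given $V\subseteq\llbracket 1,k\rrbracket$ the unique element satisfying equation~\ref{V} is $q=\prod_{j\in V}(j,\,n-k+j)$.

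The only genuinely delicate step is the second one: one must verify that the constraint $i\le k$ in equation~\ref{V} removes precisely the bottom halves $n-k+j$ of the swapped columns while retaining the top indices $j$. This is exactly where the hypothesis $k\le\lfloor n/2\rfloor$ enters, guaranteeing $n-k\ge k$ so that no bottom entry can accidentally fall in $\llbracket 1,k\rrbracket$. Once this is settled, existence and uniqueness are immediate from the fact that the subsets of $\llbracket 1,k\rrbracket$ index the elements of the elementary abelian group $C(T_k)$ bijectively.
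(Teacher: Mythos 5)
Your argument is correct and is exactly the verification the paper omits: the lemma is stated there without proof, being regarded as immediate, and the intended justification is precisely your parametrization of $C(T_k)$ as the elementary abelian $2$-group $\bigl\{\prod_{j\in S}(j,\,n-k+j) : S\subseteq\llbracket 1,k\rrbracket\bigr\}$, with $n-k\ge k$ guaranteeing that the set $V$ attached to such a $q$ equals $S$. Nothing is missing; in particular your check that no bottom entry $n-k+j$ can land in $\llbracket 1,k\rrbracket$ is the one point worth writing down.
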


\begin{lemma}Given  $V\subseteq\llbracket 1,k\rrbracket $, $W\subseteq \Omega_1 $, there are 
$$\binom{n-m-\#V}{\#W}(\#W)!\binom{m-k+\#V}{m-k-\#W}(m-k-\#W)!k!(n-m)!$$ 
elements $p$ in $R(T_k)$ such that equation \ref{W} holds.
\end{lemma}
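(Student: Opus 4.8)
The plan is to count the admissible $p$ directly, exploiting the product structure of the row subgroup. Since the maximal tableau $T_k$ has top row $\llbracket 1,n-k\rrbracket $ and bottom row $\Omega_2=\llbracket n-k+1,n\rrbracket $, we have $R(T_k)=\mathrm{Sym}(\llbracket 1,n-k\rrbracket )\times\mathrm{Sym}(\Omega_2)$, so each $p$ factors uniquely as a top-row permutation times a bottom-row permutation. The first observation I would record is that condition~\eqref{W} never involves the bottom-row factor: for $i\in\Omega_2$ one has $p(i)\in\Omega_2\subseteq\Omega$, so no element of $\Omega_2$ can lie in $W$. This re-confirms $W\subseteq\Omega_1$ and shows the bottom-row permutation is completely unconstrained, contributing the factor $k!$. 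It then remains to count the top-row permutations of $S:=\llbracket 1,n-k\rrbracket $.

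Next I would rewrite \eqref{W} as a condition internal to $S$. For $i\in\Omega_1$ we have $p(i)\in S$, and since $\Omega\cap S=\Omega_1$ while $V\subseteq\llbracket 1,k\rrbracket \subseteq S$, the requirement $p(i)\notin\Omega\cup V$ is equivalent to $p(i)\notin\Omega_1\cup V$. Hence $W=\{\,i\in\Omega_1: p(i)\notin\Omega_1\cup V\,\}$, a statement about the bijection $p|_S$ relative to the two subsets $\Omega_1$ (of size $m-k$) and $V$. Here the key disjointness fact is $\Omega_1\cap V=\varnothing$, which holds because $n-m+1>k$; consequently $A:=\Omega_1\cup V$ has size $(m-k)+\#V$ and its complement $B:=S\setminus A$ has size $(n-k)-(m-k)-\#V=n-m-\#V$.

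Finally I would assemble the count by prescribing the images of the three blocks of $S$. The prescribed set $W$ must be mapped injectively into $B$, giving $\binom{n-m-\#V}{\#W}(\#W)!$ choices; the complementary block $\Omega_1\setminus W$, of size $m-k-\#W$, must be mapped injectively into $A$, giving $\binom{m-k+\#V}{m-k-\#W}(m-k-\#W)!$ choices. Because $A\cap B=\varnothing$ these images are automatically distinct, and together they occupy $m-k$ of the $n-k$ available targets. The remaining block $S\setminus\Omega_1$ has $n-m$ elements and must map bijectively onto the $n-m$ still-unused targets, contributing $(n-m)!$. Multiplying these three numbers by the earlier $k!$ yields the claimed formula. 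The only point that truly needs care — and the main obstacle — is verifying that the three target conditions are jointly consistent and non-overlapping, i.e.\ that after placing the first two blocks exactly $n-m$ slots survive for the last one; the degenerate cases (when $\#W>n-m-\#V$ or $\#W>m-k$) are handled automatically, since the relevant binomial coefficient then vanishes, matching the absence of admissible $p$.
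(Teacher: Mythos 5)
Your argument is correct and is essentially the paper's own proof, spelled out in greater detail: the paper counts exactly the same three blocks (images of $W$, images of $\Omega_1\setminus W$, and the remaining $k!(n-m)!$ unconstrained choices). The only difference is that you make explicit what the paper leaves implicit, namely the factorization $R(T_k)=\mathrm{Sym}(\llbracket 1,n-k\rrbracket)\times\mathrm{Sym}(\Omega_2)$ and the disjointness $\Omega_1\cap V=\varnothing$ guaranteeing that the target sets have the stated sizes.
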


\begin{proof} 
There are $\binom{n-m-\#V}{\#W}(\#W)!$ possibilities for the restriction of $p$ to $W$. There are $\binom{m-k+\#V}{m-k-\#W}(m-k-\#W)!$
possibilities for the restriction of $p$ to $\{\;i\in\Omega_1\backslash W \;\}$. 
Given both restrictions of $p$  there are still $k!(n-m)!$
possibilities.\qed
\end{proof}

Whenever we refer to `the last coordinate', this will be with respect to the standard basis. The last coordinate is the $\Omega$ coordinate.

\begin{lemma}\label{L5}The last coordinate of $\pi(c_k)$ is $(m-k)!k!(n-m)!$.
\end{lemma}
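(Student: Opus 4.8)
The plan is to read the $\Omega$-coordinate of $\pi(c_k)$ straight off the definition $c_k=b_ka_k=\sum_{q\in C(T_k)}\sum_{p\in R(T_k)}\sgn(q)\,qp$. Since $\pi$ sends a group element $g$ to the single standard basis vector $g(\Omega)\in M^{(n-m,m)}$, linearity gives $\pi(c_k)=\sum_{q,p}\sgn(q)\,\pi(qp)$, and the last coordinate is the coefficient of the basis vector $\Omega$, namely
$$\sum_{q\in C(T_k)}\sum_{p\in R(T_k)}\sgn(q)\,[\,\pi(qp)=\Omega\,].$$
Thus the whole task reduces to identifying, with signs, those pairs $(q,p)$ for which $qp$ stabilizes the set $\Omega$.

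First I would note that $\pi(qp)=(qp)(\Omega)$ and $\Omega$ are both $m$-element sets, so $\pi(qp)=\Omega$ holds if and only if $\Omega\cap\pi(qp)$ already has $m$ elements. By the preceding cardinality lemma this intersection has size $m-\#V-\#W$, so the equality $\pi(qp)=\Omega$ is equivalent to $\#V+\#W=0$, i.e.\ to $V=\emptyset$ and $W=\emptyset$ simultaneously. Next I would pin down the surviving pairs: the set $V$ depends only on $q$, and by the lemma asserting a unique $q$ for each prescribed $V\subseteq\llbracket 1,k\rrbracket$, the condition $V=\emptyset$ singles out exactly one $q$, necessarily the identity, for which $\sgn(q)=(-1)^{\#V}=1$. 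With this $q$ fixed and $W=\emptyset$ imposed, the lemma counting the admissible $p$, evaluated at $\#V=\#W=0$, yields
$$\binom{n-m}{0}\,0!\,\binom{m-k}{m-k}\,(m-k)!\,k!\,(n-m)!=(m-k)!\,k!\,(n-m)!$$
such $p$. Since only $q=\mathrm{id}$ contributes and its sign is $+1$, the last coordinate equals $(m-k)!\,k!\,(n-m)!$, as claimed.

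The only point needing care is in the second step: one must be certain that an intersection of full size $m$ forces the two $m$-subsets to coincide, so that no cancellation between positively and negatively signed terms can survive, and that $V=\emptyset$ leaves no freedom whatsoever in $q$. Both facts are immediate from the lemmas already proved, so the argument is in the end a substitution of $\#V=\#W=0$ into formulas established above; I do not expect any genuine combinatorial obstacle to remain.
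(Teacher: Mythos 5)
Your proof is correct and follows exactly the route the paper intends: the paper's one-line proof (``We must take $V$ and $W$ empty'') is precisely your observation that $\pi(qp)=\Omega$ forces $\#V+\#W=0$ via the cardinality lemma, after which the uniqueness lemma for $q$ and the counting lemma for $p$, evaluated at $\#V=\#W=0$, give $(m-k)!\,k!\,(n-m)!$ with sign $+1$. You have simply spelled out the details the paper leaves implicit.
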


\begin{proof} 
We must take $V$ and $W$ empty.\qed
\end{proof}

In particular, the last coordinate of $\pi(c_k)$ is nonzero. This means that $\pi$ maps $S^{(n-k,k)}$ isomorphically into $M^{(n-m,m)}$.
By Schur's lemma $\pi(c_k)$ is an eigenvector of our universal intertwining matrix $B_{(n-m,m)}$. Notice that the $S_n$ orbits of the
$\pi(c_k)$, $k=0,\cdots, m$, together span all of $M^{(n-m,m)}$. So we may assume our diagonalizing basis $d$ is contained in the union
of these orbits. Then the $d_i$ are $\Q$-linear combinations of the standard basis. 

\begin{lemma}The last coordinate of $B_{(n-m,m)}\pi(c_k)$ is 
$$\sum _{v=0}^{k} \sum _{w=0}^{m-k} (-1)^{v} \binom{k}{v} \binom{m-k}{w} \binom{n-m-v}{w}w!\binom{m-k+v}{m-k-w}(m-k-w)!k!(n-m)!
  b_{m-v-w} .$$
\end{lemma}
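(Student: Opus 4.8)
The plan is to compute the $\Omega$-coordinate of $B_{(n-m,m)}\pi(c_k)$ straight from the definitions and then reorganize the resulting sum according to the combinatorial data $V$ and $W$. First I would unfold both factors. Writing $e_\tau$ for the standard basis vector attached to $\tau\in\mathcal{C}^n_m$, the coefficient of $e_\tau$ in $\pi(c_k)$ is $\sum_{q,p:\,(qp)(\Omega)=\tau}\sgn(q)$, where $q$ ranges over $C(T_k)$ and $p$ over $R(T_k)$, because $c_k=\sum_{q,p}\sgn(q)\,qp$ and $\pi(qp)=(qp)(\Omega)$. On the other side, the $\Omega$-row of $B_{(n-m,m)}$ contributes $b_{\#(\Omega\cap\tau)}$ to the last coordinate of $B_{(n-m,m)}e_\tau$. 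Combining these and interchanging the order of summation, the last coordinate of $B_{(n-m,m)}\pi(c_k)$ collapses to the single double sum
\[
\sum_{q\in C(T_k)}\sum_{p\in R(T_k)}\sgn(q)\,b_{\#(\Omega\cap\pi(qp))}.
\]

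Next I would rewrite each summand using the lemmas already established. The remark following the definition of $V$ gives $\sgn(q)=(-1)^{\#V}$, and the cardinality lemma gives $\#(\Omega\cap\pi(qp))=m-\#V-\#W$. Hence every term equals $(-1)^{\#V}b_{m-\#V-\#W}$, so the contribution of a pair $(q,p)$ depends on $(q,p)$ only through the associated sets $V=V(q)$ and $W=W(p,q)$. The idea is therefore to replace the sum over pairs $(q,p)$ by a sum over pairs $(V,W)$ with $V\subseteq\llbracket 1,k\rrbracket$ and $W\subseteq\Omega_1$, each weighted by the number of $(q,p)$ realizing it.

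Finally I would assemble the weights. By the uniqueness lemma each $V$ arises from exactly one $q\in C(T_k)$, and by the counting lemma, for fixed $V$ and $W$ the number of admissible $p\in R(T_k)$ is $\binom{n-m-\#V}{\#W}(\#W)!\binom{m-k+\#V}{m-k-\#W}(m-k-\#W)!\,k!\,(n-m)!$, which depends on $V,W$ only through their cardinalities. Since $\Omega_1=\llbracket n-m+1,n-k\rrbracket$ has exactly $m-k$ elements, there are $\binom{k}{v}$ sets $V$ of size $v$ and $\binom{m-k}{w}$ sets $W$ of size $w$. Collecting terms by $v=\#V$ and $w=\#W$ then produces exactly the asserted double sum. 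The whole computation is essentially bookkeeping once the preceding lemmas are in hand; the only point that genuinely needs care is this regrouping step, namely verifying that both the sign $(-1)^{\#V}$ and the index $m-\#V-\#W$ of $b$ depend solely on the cardinalities, so that the subset-counting factors $\binom{k}{v}$ and $\binom{m-k}{w}$ factor out cleanly and the $p$-count can be taken outside unchanged.
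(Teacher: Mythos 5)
Your proposal is correct and follows essentially the same route as the paper, whose own proof is just the terse version of your argument: multiply the last row of $B_{(n-m,m)}$ by $\pi(c_k)$ and organize the resulting sum over pairs $(q,p)$ as a sum over the pairs $(V,W)$, invoking the preceding lemmas for the sign, the intersection cardinality $m-\#V-\#W$, the uniqueness of $q$ given $V$, and the count of admissible $p$. Your explicit verification that the summand depends on $(V,W)$ only through the cardinalities $v=\#V$ and $w=\#W$, yielding the factors $\binom{k}{v}$ and $\binom{m-k}{w}$, is exactly the bookkeeping the paper leaves implicit.
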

\begin{proof} 
Multiply the last row of the matrix $B_{(n-m,m)}$ by $\pi (c_k)$. The result is the
the sum over all choices of $V\subseteq\llbracket 1,k\rrbracket $, $W\subseteq \Omega_1 $, where $v=\#V$ and $w=\#W$.\qed
\end{proof}

\begin{theorem}\label{Thm 1}The eigenvalue associated with the eigenvector $\pi(c_k)$ of $B_{n-m,m}$ is
$$ \lambda_k =\sum _{j=0}^k \sum _{p=0}^{m-k} (-1)^{k-j} \binom{k}{j} \binom{m-j}{p}  \binom{n-m-k+j}{m-k-p} b_{j+p}$$
and its multiplicity is $f^{(n-k,k)}=\frac{n!(n-2k+1)}{k!(n-k+1)!}$.
\end{theorem}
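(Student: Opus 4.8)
The plan is to turn the eigenvector relation into a single scalar identity by comparing one coordinate. Since the last coordinate of $\pi(c_k)$ is nonzero by Lemma~\ref{L5}, and $\pi(c_k)$ is an eigenvector of $B=B_{(n-m,m)}$ by Schur's lemma, we have $B\pi(c_k)=\lambda_k\,\pi(c_k)$, so reading off the last coordinate gives
$$\lambda_k=\frac{\text{last coordinate of }B\pi(c_k)}{\text{last coordinate of }\pi(c_k)}.$$
First I would substitute the denominator $(m-k)!\,k!\,(n-m)!$ from Lemma~\ref{L5} and the double sum for the numerator from the preceding lemma, then cancel the common factor $k!\,(n-m)!$. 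This reduces the theorem to a purely combinatorial simplification of the resulting double sum.

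Next I would reindex so that the subscript on $b$ matches the claimed form. The substitution $j=k-v$ converts $(-1)^v\binom{k}{v}$ into $(-1)^{k-j}\binom{k}{j}$ and $\binom{n-m-v}{w}$ into $\binom{n-m-k+j}{w}$, while $p=m-k-w$ converts $\binom{m-k}{w}$ into $\binom{m-k}{p}$ and sends $b_{m-v-w}$ to $b_{j+p}$. Under these substitutions the binomial $\binom{n-m-v}{w}$ becomes exactly the third factor $\binom{n-m-k+j}{m-k-p}$ of the claimed $\lambda_k$, and $\binom{m-k+v}{m-k-w}$ becomes $\binom{m-j}{p}$, the second factor.

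The one step that needs checking — and the only place a miscount could hide — is that all remaining factorials collapse. Gathering the leftover factors $\binom{m-k}{w}\,w!\,(m-k-w)!/(m-k)!$, after reindexing these read $\binom{m-k}{p}\,(m-k-p)!\,p!/(m-k)!$, which I expect to equal $1$ since $\binom{m-k}{p}=(m-k)!/\bigl(p!\,(m-k-p)!\bigr)$. With that cancellation the double sum becomes precisely the stated $\lambda_k$. Finally, the multiplicity is immediate from the representation theory already assembled: by Young's rule (Proposition~\ref{Prop 1}) the summand $S^{(n-k,k)}$ occurs exactly once in $M^{(n-m,m)}$, and since $B$ is intertwining, Schur's lemma forces it to act as the scalar $\lambda_k$ on this whole irreducible copy; hence the associated eigenspace has dimension $\dim S^{(n-k,k)}=f^{(n-k,k)}=\frac{n!(n-2k+1)}{k!(n-k+1)!}$.
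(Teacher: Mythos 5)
Your proposal is correct and follows essentially the same route as the paper's own proof: the paper likewise divides the last coordinate of $B_{(n-m,m)}\pi(c_k)$ (the double sum of the preceding lemma) by the last coordinate $(m-k)!\,k!\,(n-m)!$ of $\pi(c_k)$ from Lemma~\ref{L5}, and then rewrites via the substitutions $v\mapsto k-j$, $w\mapsto m-k-p$, exactly the reindexing and factorial cancellation you carry out. Your justification of the multiplicity via Young's rule and Schur's lemma is also the one implicit in the paper's earlier discussion.
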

\begin{proof} 
Divide the last coordinate of $B_{(m,m)}\pi(c_k)$ by the last coordinate of $\pi(c_k)$.
Then rewrite, using the substitutions $v\mapsto k-j$, $w\mapsto m-k-p$.\qed
\end{proof}

\begin{example}
$B_{3,3}$ has eigenvalues:
\begin{itemize}
\item $ b_0 + 9 b_1 + 9 b_2 +b_3$ with multiplicity 1,
\item $ b_0 - b_1 - b_2 + b_3$ with multiplicity 9,
\item  $ -b_0 + 3 b_1 - 3 b_2 + b_3$  with multiplicity 5 and
\item $-b_0 -3 b_1 + 3b_2 + b_3$  with multiplicity 5. 
\end{itemize}
\end{example}

\subsection{Eberlein polynomials} The Eberlein polynomial $E_k$ is defined \cite[(4.33)]{De1} as
$$E_k(u)=\sum_{j=0}^{k}(-1)^{k-j} \binom{m-j}{k-j}\binom{m-u}{j}\binom{n-m+j-u}{j},$$
where $0\leq k\leq m\leq n/2$ as above. It is of degree $k$ in the variable $u(n+1-u)$.

There are several more descriptions of $E_k$ in \cite{De2}.
We now get:
\begin{corollary}
With $k,m,n$ as above, one has 
$$E_t(k)=\sum _{j=0}^k  (-1)^{k-j} \binom{k}{j} \binom{m-j}{m-t-j}  \binom{n-m-k+j}{n-m-t}$$
for $0\leq t\leq m$.
\end{corollary}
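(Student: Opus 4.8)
\noindent The plan is to recognize the right-hand side of the Corollary as a single coefficient read off from Theorem~\ref{Thm 1}, and then to match that coefficient with the Eberlein polynomial. First I would extract from the eigenvalue $\lambda_k$ the coefficient of a fixed $b_i$. Grouping the double sum of Theorem~\ref{Thm 1} according to the value $i=j+p$ and writing $p=i-j$, the coefficient of $b_i$ is $\sum_j (-1)^{k-j}\binom{k}{j}\binom{m-j}{i-j}\binom{n-m-k+j}{m-k-i+j}$. Setting $i=m-t$ and using the symmetries $\binom{m-j}{m-t-j}=\binom{m-j}{t}$ and $\binom{n-m-k+j}{t-k+j}=\binom{n-m-k+j}{n-m-t}$, this coefficient becomes precisely the sum displayed on the right of the Corollary. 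Hence the Corollary is equivalent to the clean statement that the coefficient of $b_{m-t}$ in $\lambda_k$ is $E_t(k)$, which is exactly the assertion that our formula reproduces Delsarte's eigenmatrix.

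\noindent With this reformulation in hand, what remains is a purely combinatorial identity between two terminating sums: the defining expression $E_t(k)=\sum_{j=0}^{t}(-1)^{t-j}\binom{m-j}{t-j}\binom{m-k}{j}\binom{n-m-k+j}{j}$ and the coefficient $\sum_{j=0}^{k}(-1)^{k-j}\binom{k}{j}\binom{m-j}{t}\binom{n-m-k+j}{n-m-t}$. I would establish this by creative telescoping: using the package \cite{weg} that is already in play, compute a recurrence in the index $t$ for each side, check that the two recurrences coincide, and verify the base cases $t=0$ and $t=1$ (at $t=0$ both sides equal $1$). Since $E_t$ has degree $t$ in $u(n+1-u)$, the relevant recurrence is the second-order three-term recurrence for Eberlein polynomials, for which these two base cases suffice.

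\noindent The main obstacle is the identity itself. The two sums run over different ranges ($0\le j\le t$ against $0\le j\le k$) and carry different factor patterns, so there is no term-by-term comparison; a naive reindexing collapses to a Vandermonde- or Saalsch\"utz-type convolution that must be handled with care. I expect the most reliable route is to certify the identity by the Zeilberger recurrence, which avoids guessing a closed transformation by hand; as an alternative one can invoke one of the equivalent descriptions of $E_k$ recorded in \cite{De2} and match it directly against the coefficient extracted above.
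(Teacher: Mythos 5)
Your coefficient extraction is exactly right and coincides with what the paper's proof does implicitly: grouping the double sum of Theorem~\ref{Thm 1} by $i=j+p$, setting $i=m-t$, and using $\binom{m-j}{m-t-j}=\binom{m-j}{t}$ and $\binom{n-m-k+j}{t-k+j}=\binom{n-m-k+j}{n-m-t}$ (the out-of-range terms vanish on both sides) shows the Corollary is precisely the statement that the coefficient of $b_{m-t}$ in $\lambda_k$ equals $E_t(k)$. Where you diverge is in how that identification is established. The paper proves no binomial identity at all: it cites Delsarte \cite[Thm 4.6]{De1}, which says that the eigenvalue of the Johnson-scheme adjacency matrix $D_t$ on the $k$-th common eigenspace is $E_t(k)$; since $B=\sum_t b_{m-t}D_t$ and the common eigenspaces are the same Specht constituents in both computations (matched, e.g., by their distinct multiplicities $f^{(n-k,k)}$), one gets $\lambda_k=\sum_t E_t(k)\,b_{m-t}$, and comparing coefficients finishes the proof in one line. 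Your route --- certifying the resulting terminating hypergeometric identity by creative telescoping with \cite{weg} --- would instead yield a self-contained proof, independent of Delsarte's spectral theorem, and in effect an independent re-derivation of a column of his eigenmatrix; your fallback of matching against one of the alternative expressions for $E_k$ in \cite{De2} is closer in spirit to the paper's citation. The one caution is that your key step is at present a plan rather than a proof: you assert, but do not derive, that the coefficient sum satisfies the second-order Eberlein recurrence in $t$ --- that is exactly what the telescoping must certify, and a priori Zeilberger's algorithm may return recurrences of different orders for the two sides, in which case you should pass to a common recurrence and check correspondingly more initial values. Your base cases do check out: at $t=0$ only the $j=k$ term survives on the right and both sides equal $1$, and at $t=1$ both sides equal $(m-k)(n-m)-k(m-k+1)$; so once the recurrences are actually computed and matched, your argument closes.
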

\begin{proof}By \cite[Thm 4.6]{De1} one may view $E_t(k)$ as the coefficient of $b[m-t]$ in $\lambda_k$ of our Theorem~\ref{Thm 1}.\qed
\end{proof}
\subsection{The Eisenbud-Levine-Khimshiashvili index computation}
We now turn to the problem that motivated the present work.

\begin{proposition}{\rm(\cite[Prop.\ 4]{Si})}
Substitute $$b_i=(-1)^i \frac{(2m-2i-1)!!(2i)!!}{(2m-1)!!},\qquad b_m=(-1)^m\frac{(2m)!!}{(2m-1)!!}$$
into $B_{m,m}$. The eigenvalues are $$\lambda_k=(-1)^m\frac{2m+1}{2m-2k+1}$$ for $0\le k\le m$,
with multiplicity $f^{(2m-k,k)}=\frac{(2m)!(2m-2k+1)}{k!(2m-k+1)!}$.
\end{proposition}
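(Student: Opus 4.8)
The plan is to obtain the multiplicity for free and to reduce the eigenvalue claim to a hypergeometric summation. First I would observe that the substitution forces $n=2m$, since $B_{m,m}=B_{(n-k,k)}$ with $n-k=k=m$. The multiplicity statement is then nothing but the multiplicity $f^{(n-k,k)}$ of Theorem~\ref{Thm 1} evaluated at $n=2m$, and requires no further argument. I would also record that the two displayed formulas for $b_i$ are in fact one formula: taking $i=m$ in the first gives $(2m-2m-1)!!=(-1)!!=1$, which reproduces the stated $b_m$. Hence the substitution is uniform and I may write $b_i=(-1)^i\frac{(2m-2i-1)!!(2i)!!}{(2m-1)!!}$ for all $0\le i\le m$.

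For the eigenvalue I would specialize Theorem~\ref{Thm 1} to $n=2m$, giving
$$\lambda_k=\sum_{j=0}^k\sum_{p=0}^{m-k}(-1)^{k-j}\binom{k}{j}\binom{m-j}{p}\binom{m-k+j}{m-k-p}\,b_{j+p},$$
and then substitute the double-factorial values. Grouping by $i=j+p$, the coefficient of $b_i$ is exactly the Eberlein value $E_{m-i}(k)$ supplied by the Corollary, so the double sum collapses to the single sum
$$\lambda_k=\sum_{i=0}^m(-1)^i\frac{(2m-2i-1)!!(2i)!!}{(2m-1)!!}\,E_{m-i}(k).$$
Converting the double factorials by $(2i)!!=2^i i!$ and $(2i-1)!!=(2i)!/(2^i i!)$ turns this into a single hypergeometric sum, which is the cleanest object to attack.

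The remaining task is to establish the closed form
$$\sum_{i=0}^m(-1)^i\frac{(2m-2i-1)!!(2i)!!}{(2m-1)!!}\,E_{m-i}(k)=(-1)^m\frac{2m+1}{2m-2k+1}.$$
Here I would invoke creative telescoping. Using the package MultiSum~\cite{weg} I would compute a recurrence in $k$ (with $m$ a parameter) satisfied by the left-hand side, verify that the rational function $(-1)^m\frac{2m+1}{2m-2k+1}$ satisfies the same recurrence, and close the argument with a boundary check. The natural base case is $k=0$, where the formula predicts $\lambda_0=(-1)^m$ and the double sum reduces to $\sum_{p=0}^m\binom{m}{p}^2 b_p$, an easily verified single-sum identity. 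Equivalently, one may keep the original double-sum form and let MultiSum certify a recurrence directly; the reduction above mainly serves to lower the order of the certificate.

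The hard part is precisely this summation. The summand carries half-integer shifts through the double factorials $(2m-2i-1)!!$ and $(2i)!!$ on top of the alternating binomial structure of $E_{m-i}(k)$, so no elementary Chu--Vandermonde-type collapse is available by hand and a hand-written WZ certificate would be unpleasant to produce. This is exactly the summation of ``complicated hypergeometric terms'' flagged in the introduction, for which the automated tool is needed; once the recurrence is found, the matching of the two sides and the boundary verification are routine.
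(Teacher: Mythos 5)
Your proposal is correct and follows essentially the same route as the paper: substitute the double-factorial values into the eigenvalue formula of Theorem~\ref{Thm 1} (the multiplicity being immediate from that theorem at $n=2m$), and then certify the guessed closed form $(-1)^m\frac{2m+1}{2m-2k+1}$ for the resulting complicated hypergeometric multisum by creative telescoping with the package \cite{weg}, which is precisely what the paper does, deferring the certificate details to the appendix of \cite{Si}. Your optional regrouping through Eberlein values is a harmless reorganization (though note $E_{m-i}(k)$ is itself a binomial sum, so this does not literally produce a single hypergeometric sum), and you sensibly hedge by allowing MultiSum to attack the original double sum directly.
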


\begin{proof}
Plugging these values of $b_i$ into the formula in the theorem, one ends up with a multisum with a complicated hypergeometric summand.
We need to evaluate this multisum. Numerical experiments suggested the answer.
We now use the computer algebra package \cite{weg} that aims to give hints for proving a guessed answer. 
See the appendix to \cite{Si} for details. Or see  the Mathematica notebook that we attach to the arXiv-version of this paper.\qed
\end{proof}

\section*{Acknowledgements}
We thank the referee of  \cite{Si} for putting us on the right road. 
We thank Sam Mattheus for pointing out the work of Delsarte on Johnson schemes.
We also thank the Mathematical Institute of  Utrecht University
for offering a workspace to us emeriti.

This research did not receive any specific grant from funding agencies in the public, commercial, or not-for-profit sectors.

\end{document}